\numberwithin{equation}{section}
\numberwithin{figure}{section}
\theoremstyle{plain}
\newtheorem{thm}{\protect\theoremname}
  \theoremstyle{plain}
  \newtheorem{prop}[thm]{\protect\propositionname}
  \theoremstyle{plain}
  \newtheorem{lem}[thm]{\protect\lemmaname}
  \providecommand{\lemmaname}{Lemma}
  \providecommand{\propositionname}{Proposition}
\providecommand{\theoremname}{Theorem}
\begin{document}
\address[Minoru Hirose]{Multiple Zeta Research Center, Kyushu University}
\email{m-hirose@math.kyushu-u.ac.jp}
\address[Nobuo Sato]{Department of Mathematics, Kyoto University}
\email{saton@math.kyoto-u.ac.jp}

\title{On Hoffman's conjectural identity}

\author{Minoru Hirose, Nobuo Sato}
\begin{abstract}
In this paper, we shall prove the equality 
\[
\zeta(3,\{2\}^{n},1,2)=\zeta(\{2\}^{n+3})+2\zeta(3,3,\{2\}^{n})
\]
conjectured by Hoffman using certain identities among iterated integrals
on $\mathbb{P}^{1}\setminus\{0,1,\infty,z\}$.
\end{abstract}
\maketitle

\section{Introduction}

Multiple zeta values, or MZVs in short, are real numbers defined by
\[
\zeta(k_{1},\ldots,k_{d})=\sum_{0<n_{1}<\cdots<n_{d}}\frac{1}{n_{1}^{k_{1}}\cdots n_{d}^{k_{d}}}.
\]
In February 2000, Hoffman proposed the following conjectural formula
on his homepage \cite{HH}:
\[
\zeta(3,\{2\}^{n},1,2)=\zeta(\{2\}^{n+3})+2\zeta(3,3,\{2\}^{n})\qquad(n\in\mathbb{Z}_{\geq0}).
\]
Here, we used the simplified notations, e.g. $\zeta(3,\{2\}^{n},1,2)=\zeta(3,\overbrace{2,\ldots,2}^{n\,\mathrm{times}},1,2).$
In \cite{HH}, Hoffman also mentioned that J. Vermaseren had checked
his conjecture up to $n=8$ by using the MZV data mine presented in
\cite{DataMine}. In \cite{Ch16theses} or \cite{Ch16arxiv}, Charlton
proved the existence of a rational number $q_{n}$ such that
\[
\zeta(3,\{2\}^{n},1,2)=q_{n}\zeta(\{2\}^{n+3})+2\zeta(3,3,\{2\}^{n})\qquad(n\in\mathbb{Z}_{\geq0})
\]
by calculating Brown\textquoteright{}s operators $D_{r}$'s. However,
the method based on Brown's operators does not seem to provide a way
to determine the value of $q_{n}$. Thus, in this paper we give a
proof of Hoffman's conjecture by using a completely different approach
based on the iterated integrals on $\mathbb{P}^{1}\setminus\{0,1,\infty,z\}$.
More generally, we shall prove the following theorem.
\begin{thm}
\label{Main_theorem}For $m,s\in\mathbb{Z}_{\geq1}$ and $n\in\mathbb{Z}_{\geq0}$,
\begin{align*}
 & \zeta(\{2\}^{m-1},3,\{2\}^{n},1,\{2\}^{s})\\
 & =\zeta(\{2\}^{n+m+s+1})+\zeta(\{2\}^{s-1},3,\{2\}^{m-1},3,\{2\}^{n})+\zeta(\{2\}^{m-1},3,\{2\}^{s-1},3,\{2\}^{n}).
\end{align*}

\end{thm}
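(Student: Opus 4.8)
The plan is to prove the identity by realizing all four multiple zeta values appearing in it through iterated integrals on the four-punctured sphere $X_z:=\mathbb{P}^{1}\setminus\{0,1,z,\infty\}$ and then degenerating the extra puncture $z$ to $0$ and to $1$. I would first fix the usual dictionary between multiple zeta values and iterated integrals: in the convention in which the argument $2$ corresponds to the binary word $10$, the argument $3$ to $100$ and the argument $1$ to $1$, every $\zeta(k_{1},\dots,k_{d})$ equals, up to sign, a regularized iterated integral $I(0;w;1)$ in the letters $0,1$ (attached to the forms $dt/t$ and $dt/(1-t)$), with the endpoint divergences removed by shuffle regularization at the two tangential base points. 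On $X_z$ one has in addition the letter $z$, attached to $dt/(t-z)$, together with two degeneration operations: as $z\to1$ a letter $z$ becomes a letter $1$, and as $z\to0$ a letter $z$ becomes, up to sign, a letter $0$, where in each case one first shuffle-regularizes the pair of colliding punctures. For example the one-parameter family $I\bigl(0;(10)^{m-1}(100)(10)^{n}\,z\,(10)^{s};1\bigr)$, obtained by promoting to a $z$ the single letter $1$ that encodes the interior argument $1$ on the left-hand side, has the left-hand side of the theorem as its $z\to1$ limit and, for $n\geq1$, the value $\zeta(\{2\}^{m-1},3,\{2\}^{n-1},3,\{2\}^{s})$ (up to sign) as its naive $z\to0$ limit.

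That single degeneration does not produce the right-hand side on the nose, so the argument must combine it with the other relations that persist on $X_z$: the shuffle product, path reversal $I(y;w;x)=(-1)^{|w|}I(x;\overleftarrow{w};y)$, path composition through an intermediate point, and --- the feature that genuinely distinguishes the four-puncture theory from the three-puncture one --- the functional equations coming from Möbius transformations of $\mathbb{P}^{1}$, both those permuting $\{0,1,z,\infty\}$ (such as $t\mapsto z/t$ and $t\mapsto(z-t)/(1-t)$) and those sending $X_z$ to $X_{z'}$ for $z'$ in the $S_{3}$-orbit $\{1-z,1/z,z/(z-1),\dots\}$ (such as $t\mapsto1-t$, which specializes to the classical duality of multiple zeta values). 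After letting $z\to0$ or $z\to1$ these turn into honest identities among multiple zeta values, and I expect them to be exactly what converts the "wrong-order" value coming out of the naive limit, together with a $\zeta(\{2\}^{m+n+s+1})$ correction absorbed from the regularization, into the symmetric combination $\zeta(\{2\}^{s-1},3,\{2\}^{m-1},3,\{2\}^{n})+\zeta(\{2\}^{m-1},3,\{2\}^{s-1},3,\{2\}^{n})$; in particular these symmetries are what force the right-hand side to be invariant under $m\leftrightarrow s$, which the left-hand side is not. Concretely I see two ways to organize this, and the proof may use a mixture: either as a \emph{confluence relation} --- differentiate the chosen family in $z$, integrate by parts to make $dt/(t-z)$ reappear, and integrate back from $z=0$ to $z=1$, so that the regularized value at $z=1$ is the left-hand side, the regularized value at $z=0$ is controlled, and the integral of the $z$-derivative is a combination of strictly simpler iterated integrals handled by induction --- or by exhibiting two words $w,w'$ in $\{0,1,z\}$ with $I(0;w;1)=I(0;w';1)$ coming from one of the above symmetries and degenerating $w$ to the left-hand side and $w'$ to the right-hand side. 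In either case an induction is needed, most naturally on $n$ (with the case $n=0$, where the $\zeta(\{2\}^{m+s+1})$ term is born, and the original Hoffman case $m=s=1$ treated first).

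The main obstacle, as always in this circle of ideas, is twofold. First, the regularization bookkeeping: one has to carry the shuffle regularizations cleanly through both the $z\to0$ and the $z\to1$ limits and through every application of reversal and of the $X_z$-symmetries, and verify that all the divergent pieces cancel, leaving only convergent multiple zeta values and no stray constants --- this is where a careless argument would go wrong. Second, the purely combinatorial matching: after everything is assembled one must check that the surviving words are exactly $(10)^{m+n+s+1}$, $(10)^{s-1}(100)(10)^{m-1}(100)(10)^{n}$ and $(10)^{m-1}(100)(10)^{s-1}(100)(10)^{n}$, each with coefficient precisely $1$, and nothing else. It is precisely this --- that the coefficients come out to be honest $1$'s rather than unknown rationals --- that makes the method determine Hoffman's constant $q_{n}=1$, whereas the approach through Brown's operators $D_{r}$ stops short of it.
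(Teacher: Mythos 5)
Your overall strategy is the right one, and it is in fact the strategy of the paper: deform the relevant words into iterated integrals on $\mathbb{P}^{1}\setminus\{0,1,z,\infty\}$, differentiate in $z$, invoke duality, run an induction, and recover the identity by letting $z$ degenerate to a boundary point. But what you have written is a plan rather than a proof, and the one thing it omits is the only hard part. The entire content of the paper's argument is an explicit ansatz: four two-parameter families $F_{ee}(m,n)$, $F_{oe}(m,n)$, $F_{eo}(m,n)$, $F_{oo}(m,n)$ in $\mathbb{Q}\langle e_{0},e_{1},e_{z}\rangle$ --- each a signed sum of six words, not a single word --- which form a system \emph{closed} under the operators $\partial_{z,0}$ and $\partial_{z,1}$ modulo elements killed by duality, so that induction on word length together with the vanishing of $L$ at $z\to\infty$ forces $L(F_{**}(m,n))=0$; the theorem is then read off from the $z\to-0$ limit of $L(F_{ee}(m,n))=0$. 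Your single candidate family $I\bigl(0;(10)^{m-1}(100)(10)^{n}\,z\,(10)^{s};1\bigr)$ does not close up under $d/dz$ by itself, and you give no recipe for producing the five correction terms that make it close; the assertion that shuffle, reversal and M\"obius symmetries ``should convert the wrong-order value into the symmetric combination'' is precisely the step that needs to be made explicit, and it is where the coefficient $q_{n}=1$ is actually decided. Deferring the ``combinatorial matching'' to the end is deferring the theorem.

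A secondary point: the regularization bookkeeping that you flag as the first main obstacle is largely avoidable, and the paper avoids it. It restricts $L$ to the subspace $\mathcal{A}^{1}$ of words that are convergent for $z\notin[0,1]$, fixes the constant of integration by the vanishing of $L$ on the ideal generated by $e_{z}$ as $z\to\infty$ (rather than by matching regularized values at the two cusps $z=0$ and $z=1$), and takes a single boundary limit $z\to-0$ at the very end, applied to a combination whose limit is a sum of convergent multiple zeta values. Organizing the argument around regularized limits at both $z=0$ and $z=1$, as you propose, manufactures divergences that the actual proof never has to confront.
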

This theorem together with the duality relation gives a special case
of Charlton's generalized cyclic insertion conjecture (see \cite[Section 2]{Ch16theses}
or \cite{Ch16arxiv}). In particular, the case $m=s=1$ of the theorem
gives Hoffman's conjecture.

In Section \ref{sec:review_of_iterated_integral}, we briefly review
the iterated integrals on $\mathbb{P}^{1}\setminus\{0,1,\infty,z\}$
which was investigated in \cite{HIST} by Iwaki, Tasaka and the authors.
In Section \ref{sec:Proof_of_Hoffman's_conjecture}, we give a proof
of Theorem \ref{Main_theorem} using a lemma in Section \ref{sec:review_of_iterated_integral}. 

The authors thank Michael E. Hoffman for kindly letting us know when
the conjecture was formulated.

\section{A review of iterated integrals on $\mathbb{P}^{1}\setminus\{0,1,\infty,z\}$\label{sec:review_of_iterated_integral}}

In this section, we review the differential formula for the iterated
integrals on $\mathbb{P}^{1}\setminus\{0,1,\infty,z\}$ given in \cite{HIST}.
Let $\mathcal{A}:=\mathbb{Q}\left\langle e_{0},e_{1},e_{z}\right\rangle $
denote the non-commutative polynomial algebra generated by three indeterminates
$e_{0},e_{1}$ and $e_{z}$ over $\mathbb{Q}$. Let $\mathcal{A}^{1}$
denote the subalgebra of $\mathcal{A}$ defined by 
\[
\mathcal{A}^{1}:=\mathbb{Q}+\mathbb{Q}e_{z}+e_{1}\mathcal{A}e_{0}+e_{z}\mathcal{A}e_{0}+e_{1}\mathcal{A}e_{z}+e_{z}\mathcal{A}e_{z}.
\]
We assume that $z\in\mathbb{C}\setminus[0,1]$ and define a linear
function $L$ on $\mathcal{A}^{1}$, by assigning a word $w=e_{a_{1}}\cdots e_{a_{m}}$
in $e_{0},e_{1}$ and $e_{z}$, the iterated integral 
\[
I(0;a_{1},\ldots,a_{m};1)=\int_{0<t_{1}<\cdots<t_{m}<1}\prod_{i=1}^{m}\frac{dt_{i}}{t_{i}-a_{i}}.
\]
Here, the path of integration is taken as a line segment between $0$
and $1$. Note that, for $w\in\mathcal{A}^{1}$, $L(w)$ defines a
holomorphic function of $z$ on $\mathbb{C}\setminus[0,1]$. The MZVs
are, then, expressed as 
\[
\zeta(k_{1},\ldots,k_{d})=(-1)^{d}L(e_{1}e_{0}^{k_{1}-1}\cdots e_{1}e_{0}^{k_{d}-1}).
\]

Let $\partial_{z,0}$ and $\partial_{z,1}$ be linear operators on
$\mathcal{A}^{1}$ defined by
\[
\partial_{z,b}\left(e_{a_{1}}\cdots e_{a_{n}}\right):=\sum_{i=1}^{n}\left(\delta_{\{a_{i},a_{i+1}\},\{z,b\}}-\delta_{\{a_{i-1},a_{i}\},\{z,b\}}\right)e_{a_{1}}\cdots\widehat{e_{a_{i}}}\cdots e_{a_{n}}
\]
for $b\in\{0,1\}$. Here, we set $a_{0}=0,a_{n+1}=1$ and $\delta_{\{a,b\},\{c,d\}}$
denotes the Kronecker delta, i.e. 
\[
\delta_{\{a,b\},\{c,d\}}=\begin{cases}
\:1 & \mbox{ if }\{a,b\}=\{c,d\}\\
\:0 & \mbox{ if }\{a,b\}\neq\{c,d\}.
\end{cases}
\]
Note that $\partial_{z,0}$ and $\partial_{z,1}$ reduce the word
length by $1$. The following is a fundamental property of $\partial_{z,0}$
and $\partial_{z,1}$.
\begin{prop}
[\cite{HIST}]\label{prop:differential_formula}For $w\in\mathcal{A}^{1}$,
\[
\frac{d}{dz}L(w)=\frac{1}{z}L(\partial_{z,0}w)+\frac{1}{z-1}L(\partial_{z,1}w).
\]

\end{prop}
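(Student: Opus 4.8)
The plan is to differentiate the integral defining $L(w)$ under the integral sign and then integrate by parts in each integration variable; the boundary terms will reassemble into $\frac1zL(\partial_{z,0}w)+\frac1{z-1}L(\partial_{z,1}w)$. Since $L$, $\partial_{z,0}$, $\partial_{z,1}$ and $\frac{d}{dz}$ are all $\mathbb{Q}$-linear, it suffices to treat a single word $w=e_{a_1}\cdots e_{a_m}\in\mathcal{A}^{1}$ (the case $w\in\mathbb{Q}$ being trivial). Write $\omega_a:=\frac{dt}{t-a}$ and $\Delta_m:=\{0<t_1<\cdots<t_m<1\}$, so that $L(w)=\int_{\Delta_m}\omega_{a_1}(t_1)\cdots\omega_{a_m}(t_m)$. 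For $z$ in the open set $\mathbb{C}\setminus[0,1]$ the path avoids the pole $t=z$, and because $w\in\mathcal{A}^{1}$ forces $a_1\neq0$ and $a_m\neq1$, the only other singularities (at $t_1=0$ and $t_m=1$) are integrable, uniformly on compact subsets of $\mathbb{C}\setminus[0,1]$; hence one may differentiate under the integral sign. As only the letters $a_i=z$ depend on $z$ and $\frac{\partial}{\partial z}\frac1{t-z}=\frac1{(t-z)^2}$,
\[
\frac{d}{dz}L(w)=\sum_{i\,:\,a_i=z}\int_{\Delta_m}\omega_{a_1}(t_1)\cdots\frac{dt_i}{(t_i-z)^2}\cdots\omega_{a_m}(t_m).
\]

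For each index $i$ with $a_i=z$, apply Stokes' theorem to the $(m-1)$-form $\eta_i$ obtained from the $i$-th summand by replacing $\frac{dt_i}{(t_i-z)^2}$ with $-\frac1{t_i-z}$: since $d\!\left(-\frac1{t_i-z}\right)=\frac{dt_i}{(t_i-z)^2}$, the $i$-th summand is $(-1)^{i-1}d\eta_i$, so its integral over $\Delta_m$ equals $(-1)^{i-1}\int_{\partial\Delta_m}\eta_i$ (Stokes being applied after the usual cutting off of the lower-dimensional corner strata, whose contributions vanish in the limit). With $t_0:=0$, $t_{m+1}:=1$, the boundary is $\bigcup_{k=0}^m\{t_k=t_{k+1}\}$, and since $\eta_i$ carries the differential $dt_j$ for every $j\neq i$, it restricts to $0$ on $\{t_k=t_{k+1}\}$ unless $k\in\{i-1,i\}$. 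On $\{t_{i-1}=t_i\}$ the scalar factor $-\frac1{t_i-z}$ merges with $\omega_{a_{i-1}}$, and when $a_{i-1}\in\{0,1\}$ the partial-fraction identity $\frac1{(t-z)(t-a)}=\frac1{z-a}\!\left(\frac1{t-z}-\frac1{t-a}\right)$ turns it into $\frac1{z-a_{i-1}}(\omega_z-\omega_{a_{i-1}})$; integrating over the face therefore gives $\pm\frac1{z-a_{i-1}}$ times the difference between the iterated integral of $w$ with the letter $e_{a_i}=e_z$ deleted and that of $w$ with $e_{a_{i-1}}$ deleted. The face $\{t_i=t_{i+1}\}$ is symmetric, and the endpoint cases $i=1,m$ simply evaluate $-\frac1{t_i-z}$ at $t_i=0$ and $t_i=1$, producing the coefficients $\frac1z$ and $\frac1{z-1}$ that the conventions $a_0=0$, $a_{m+1}=1$ encode in $\partial_{z,0}$ and $\partial_{z,1}$. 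Summing over all occurrences of $e_z$ and sorting the coefficients $\frac1{z-a_{i\pm1}}$ according to whether $a_{i\pm1}$ is $0$ or $1$, one recognizes exactly $\frac1zL(\partial_{z,0}w)+\frac1{z-1}L(\partial_{z,1}w)$; the relative minus sign between the two Kronecker deltas in the definition of $\partial_{z,b}$ is accounted for by the opposite boundary orientations of the faces $\{t_{i-1}=t_i\}$ and $\{t_i=t_{i+1}\}$ together with the ``delete $e_z$ versus delete its neighbour'' alternative in the partial fraction.

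The step requiring the most care is the case of two consecutive letters $a_i=a_{i+1}=z$: there the partial fraction degenerates --- the restriction of $\eta_i$ to $\{t_i=t_{i+1}\}$ carries a genuine double pole $\frac{dt}{(t-z)^2}$ --- and, consistently, this pair contributes nothing to $\partial_{z,0}w$ or $\partial_{z,1}w$ (its set $\{a_i,a_{i+1}\}=\{z\}$ is never $\{z,0\}$ or $\{z,1\}$). The resolution is that this bad boundary term of $\eta_i$ cancels against the corresponding term of $\eta_{i+1}$ --- the same face carried with opposite orientation --- so all such ``$d\log(z-z)$'' contributions drop out and only the terms above survive. One also has to check that $\partial_{z,0}w$ and $\partial_{z,1}w$ again lie in $\mathcal{A}^{1}$, so that the right-hand side is defined; this is immediate from the definition, via a similar cancellation at the two ends of the word. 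Conceptually, the whole computation is the specialization of Goncharov's differential formula for iterated integrals to the configuration with fixed endpoints $0,1$ and all movable points equal to $z$, the Stokes argument above furnishing a self-contained derivation; what remains --- the complete sign bookkeeping --- is routine.
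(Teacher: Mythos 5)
The paper itself offers no proof of this proposition: it is imported verbatim from \cite{HIST}. So there is no internal argument to compare against, and your proposal has to be judged on its own. It is essentially correct, and it is the expected proof: differentiate under the integral sign (legitimate, since $z\notin[0,1]$ keeps the pole $t=z$ off the path and $a_{1}\neq0$, $a_{m}\neq1$ make the corner singularities integrable locally uniformly in $z$), write $\frac{dt_{i}}{(t_{i}-z)^{2}}=d\bigl(-\frac{1}{t_{i}-z}\bigr)$, apply Stokes on the simplex, and use the partial fraction $\frac{1}{(t-z)(t-a)}=\frac{1}{z-a}\bigl(\frac{1}{t-z}-\frac{1}{t-a}\bigr)$ on the two surviving faces $\{t_{i-1}=t_{i}\}$ and $\{t_{i}=t_{i+1}\}$ --- i.e.\ Goncharov's differential formula specialized to endpoints $0,1$ and movable point $z$, which is exactly what the operators $\partial_{z,0},\partial_{z,1}$ encode. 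You correctly isolate the three points that actually require care: the degenerate face between two adjacent $e_{z}$'s, whose double-pole contributions from $\eta_{i}$ and $\eta_{i+1}$ cancel because the two restrictions agree while the prefactors $(-1)^{i-1}$ and $(-1)^{i}$ are opposite (your phrase ``opposite orientation'' is a slightly loose description of this, but the cancellation is real); the stability of $\mathcal{A}^{1}$ under $\partial_{z,b}$, which indeed follows from the cancellation of the two Kronecker deltas when the deleted letter sits next to the fictitious endpoints $a_{0}=0$, $a_{m+1}=1$; and the interpretation of the endpoint faces $t_{1}=0$, $t_{m}=1$ as the source of the coefficients $\frac1z$ and $\frac{1}{z-1}$. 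The only deferred item is the ``complete sign bookkeeping'', which is precisely where the identification with the signed sum $\delta_{\{a_{i},a_{i+1}\},\{z,b\}}-\delta_{\{a_{i-1},a_{i}\},\{z,b\}}$ is made; a sanity check on $w=e_{z}$ (where $\frac{d}{dz}\int_{0}^{1}\frac{dt}{t-z}=\frac{1}{z-1}-\frac{1}{z}$) confirms your conventions, so this is genuinely routine and the proof is complete in all essentials.
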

Let $\mathcal{I}:=\mathcal{A}e_{z}\mathcal{A}\subset\mathcal{A}$
be the two-sided ideal generated by $e_{z}$. The following lemma
is useful.
\begin{lem}
\label{lem:diff_zero}If $w\in\mathcal{I}\cap\mathcal{A}^{1}$ and
$L(\partial_{z,0}w)=L(\partial_{z,1}w)=0$, then, $L(w)=0.$\end{lem}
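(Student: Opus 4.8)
The plan is to use Proposition~\ref{prop:differential_formula} to see that $L(w)$ is independent of $z$, and then to pin down its constant value by letting $z\to\infty$. By hypothesis $L(\partial_{z,0}w)=L(\partial_{z,1}w)=0$, so Proposition~\ref{prop:differential_formula} gives $\frac{d}{dz}L(w)=0$ everywhere on $\mathbb{C}\setminus[0,1]$. This domain is connected and $L(w)$ is holomorphic on it, so $L(w)$ equals a constant $c$ there.

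It remains to prove $c=0$, and I would do this by estimating $|L(w)|$ for $|z|$ large. By linearity of $L$ we may assume $w$ is a single word $w=e_{a_1}\cdots e_{a_m}$; then $w\in\mathcal{A}^{1}$ forces $a_1\neq0$ and $a_m\neq1$, while $w\in\mathcal{I}$ means $a_i=z$ for at least one $i$, which we fix. Bounding the integrand by its modulus,
\[
|L(w)|\ \le\int_{0<t_1<\cdots<t_m<1}\ \prod_{j=1}^{m}\frac{|dt_j|}{|t_j-a_j|}\ \le\ \frac{1}{|z|-1}\,M_w,\qquad M_w:=\int_{0<t_1<\cdots<t_m<1}\ \prod_{a_j\in\{0,1\}}\frac{|dt_j|}{|t_j-a_j|}\ \prod_{a_j=z}|dt_j|,
\]
valid for $|z|\ge2$: for $t\in[0,1]$ the factor $|t-z|^{-1}$ is at most $(|z|-1)^{-1}$, which we use once for the index $i$ and bound by $1$ for every other occurrence of $e_z$. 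Since $a_1\neq0$ and $a_m\neq1$, the integral $M_w$ is the (absolutely convergent) iterated integral of a word with no pole at the lower or at the upper endpoint; it is finite by the standard convergence criterion for iterated integrals and does not depend on $z$. Hence $|L(w)|=O(1/|z|)\to0$ as $z\to\infty$, so $c=0$ and $L(w)=0$.

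The only substantive step is the last estimate: once a single factor $|t_i-z|^{-1}$ has been extracted, what remains must be a convergent iterated integral, and this is precisely guaranteed by $w\in\mathcal{A}^{1}$ and holds uniformly in $z$. This is where both hypotheses enter: $w\in\mathcal{I}$ supplies the factor $e_z$ producing the decay $O(1/|z|)$, and $w\in\mathcal{A}^{1}$ rules out the endpoint divergences that would otherwise wreck the remaining integral. An alternative to invoking the convergence criterion is to induct on the word length, integrating $\frac{d}{dz}L(w)=\frac{1}{z} L(\partial_{z,0}w)+\frac{1}{z-1}L(\partial_{z,1}w)$ along a ray toward $\infty$; but then one has to verify that the coefficient of $1/z$ on the right tends to $0$ as $z\to\infty$, so the direct estimate above seems the cleaner route.
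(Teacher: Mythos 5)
Your proof is correct and follows the same route as the paper: Proposition \ref{prop:differential_formula} shows $\frac{d}{dz}L(w)=0$ so that $L(w)$ is constant on the connected domain $\mathbb{C}\setminus[0,1]$, and the constant is identified as $0$ by letting $z\to\infty$. The paper simply asserts $\lim_{z\to\infty}L(w)=0$ from $w\in\mathcal{I}\cap\mathcal{A}^{1}$, whereas you supply the explicit $O(1/|z|)$ estimate (extracting one factor $|t-z|^{-1}\le(|z|-1)^{-1}$ from the $e_z$ guaranteed by $\mathcal{I}$, with convergence of the remainder guaranteed by $\mathcal{A}^{1}$); this is a welcome elaboration of the same argument, not a different method.
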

\begin{proof}
Since $L(\partial_{z,0}w)=L(\partial_{z,1}w)=0$, $\frac{d}{dz}L(w)=0$
by Proposition \ref{prop:differential_formula} and so $L(w)$ is
a constant. Since $w\in\mathcal{I}\cap\mathcal{A}^{1}$, $\lim_{z\rightarrow\infty}\left(L(w)\right)=0$,
which means the constant is zero.
\end{proof}

\section{A proof of Hoffman's conjecture\label{sec:Proof_of_Hoffman's_conjecture}}

In this section, we prove Theorem \ref{Main_theorem}. Fix $s\in\mathbb{Z}_{>0}$.
We set
\[
A_{n}=(e_{1}e_{0})^{n},\, B_{n}=(e_{1}e_{0})^{n}e_{1},\,\overline{A}_{n}=(e_{0}e_{1})^{n},\,\overline{B}_{n}=(e_{0}e_{1})^{n}e_{0}
\]
and
\begin{align*}
F_{ee}(m,n):= & \; A_{s}\overline{B}_{m}e_{z}A_{n}+A_{m}e_{z}(A_{s}+\overline{A}_{s})e_{z}A_{n}+A_{m}e_{z}B_{n}A_{s}\\
 & \quad-B_{s+m}e_{z}A_{n}-A_{m}e_{z}\overline{B}_{s+n}\\
F_{oe}(m,n):= & \; A_{s}\overline{A}_{m+1}e_{z}A_{n}+B_{m}e_{z}(A_{s}+\overline{A}_{s})e_{z}A_{n}+B_{m}e_{z}B_{n}A_{s}\\
 & \quad-A_{s+m+1}e_{z}A_{n}-B_{m}e_{z}\overline{B}_{s+n}\\
F_{eo}(m,n):= & \; A_{s}\overline{B}_{m}e_{z}\overline{B}_{n}+A_{m}e_{z}(A_{s}+\overline{A}_{s})e_{z}\overline{B}_{n}+A_{m}e_{z}\overline{A}_{n+1}A_{s}\\
 & \quad-B_{s+m}e_{z}\overline{B}_{n}-A_{m}e_{z}A_{s+n+1}\\
F_{oo}(m,n):= & \; A_{s}\overline{A}_{m+1}e_{z}\overline{B}_{n}+B_{m}e_{z}(A_{s}+\overline{A}_{s})e_{z}\overline{B}_{n}+B_{m}e_{z}\overline{A}_{n+1}A_{s}\\
 & \quad-A_{s+m+1}e_{z}\overline{B}_{n}-B_{m}e_{z}A_{s+n+1}
\end{align*}
for $m,n\in\mathbb{Z}_{\geq0}$. Additionally, we define 
\[
F_{ee}(m,n)=F_{oe}(m,n)=F_{eo}(m,n)=F_{oo}(m,n)=0
\]
if $m<0$ or $n<0$. 
\begin{prop}
\label{prop:1}For $m,n\in\mathbb{Z}_{\geq0}$, we have
\[
L(F_{ee}(m,n))=L(F_{oe}(m,n))=L(F_{eo}(m,n))=L(F_{oo}(m,n))=0.
\]
\end{prop}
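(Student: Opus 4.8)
The plan is to deduce all four vanishing statements at once from Lemma \ref{lem:diff_zero}, by induction on word length.

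First I would record two elementary facts about the monomials occurring in the $F$'s. Every such monomial contains the letter $e_z$, so $F_{ee}(m,n)$, $F_{oe}(m,n)$, $F_{eo}(m,n)$ and $F_{oo}(m,n)$ all lie in $\mathcal{I}$; and a direct block-by-block check shows that, because $s\geq1$, every monomial begins with $e_1$ or $e_z$ and ends with $e_0$ or $e_z$ (the only subtlety is that a monomial whose leftmost block is $A_m$ begins with $e_z$ when $m=0$, and symmetrically at the right end when $n=0$), so each $F$ lies in $\mathcal{I}\cap\mathcal{A}^{1}$. Moreover the monomials of $F_{ee}(m,n)$ all have length $2(s+m+n+1)$, those of $F_{oe}(m,n)$ and $F_{eo}(m,n)$ all have length $2(s+m+n+1)+1$, and those of $F_{oo}(m,n)$ all have length $2(s+m+n+1)+2$; in particular each $F$ is homogeneous for word length, and the shortest $F$ is $F_{ee}(0,0)$, of length $2s+2$.

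The core of the proof is the claim that the $\mathbb{Q}$-span $W$ of $\{F_{XY}(m,n):X,Y\in\{e,o\},\,m,n\in\mathbb{Z}_{\geq0}\}$ is stable under $\partial_{z,0}$ and $\partial_{z,1}$; concretely, that $\partial_{z,0}F_{XY}(m,n)$ and $\partial_{z,1}F_{XY}(m,n)$ are explicit $\mathbb{Z}$-linear combinations of $F_{X'Y'}(m',n')$'s, which are then automatically of word length one less than that of $F_{XY}(m,n)$. To establish this I would compute, straight from the definition of $\partial_{z,b}$, the action of $\partial_{z,0}$ and $\partial_{z,1}$ on every monomial of each of the four families. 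The key simplification is that $\partial_{z,b}$ reacts only to adjacencies of the form $\{e_z,e_b\}$ — including the virtual adjacency produced by the end letters $a_0=0$ and $a_{n+1}=1$, which is relevant exactly when a monomial begins or ends with $e_z$, i.e.\ when $m=0$ or $n=0$ — and, since the blocks $A_k$, $B_k$, $\overline{A}_k$, $\overline{B}_k$ contain no $e_z$, each derivative is a signed sum of local surgeries performed at the one or two occurrences of $e_z$ in the monomial (and at the word ends). One then collects terms: the monomials that have lost their only $e_z$, hence fall outside $\mathcal{I}$, must cancel in pairs coming from different monomials of the same $F$, while the surviving terms reassemble — using the evident relations among blocks such as $B_n=A_ne_1$, $\overline{B}_n=\overline{A}_ne_0$ and $A_{n+1}=B_ne_0=e_1\overline{B}_n$ — into the asserted combinations of lower $F$'s. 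Verifying every one of these cancellations and tracking all the signs and boundary contributions is the main obstacle: it is a purely mechanical but lengthy unwinding of the definition of $\partial_{z,b}$, and it is precisely this requirement that forces the particular choice of terms defining each $F$.

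Granting the stability claim, the proposition follows by induction on word length $\ell$. For the base case, $\partial_{z,0}F_{ee}(0,0)$ and $\partial_{z,1}F_{ee}(0,0)$ lie in $W$ and are combinations of monomials of length $2s+1$; since $W$ is contained in the span of the monomials of length at least $2s+2$, both vanish, so Lemma \ref{lem:diff_zero} gives $L(F_{ee}(0,0))=0$. For the inductive step, assume $L$ annihilates every $F_{X'Y'}(m',n')$ of word length less than $\ell$, and let $F$ be any of the four families at parameters for which it has word length $\ell$; by the stability claim $\partial_{z,0}F$ and $\partial_{z,1}F$ are combinations of $F$'s of word length $\ell-1$, hence lie in $\ker L$ by the inductive hypothesis, and since $F\in\mathcal{I}\cap\mathcal{A}^{1}$, Lemma \ref{lem:diff_zero} yields $L(F)=0$. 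This proves Proposition \ref{prop:1}. (Equivalently, one may induct on $m+n$, handling the four types in the order $ee$, then $oe$ and $eo$, then $oo$ at each level, with the convention $F_{X'Y'}(m',n')=0$ for negative indices supplying the base.)
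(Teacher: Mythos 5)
Your overall strategy --- induction on word length via Lemma \ref{lem:diff_zero}, after computing $\partial_{z,0}$ and $\partial_{z,1}$ of each $F$ block by block --- is the same as the paper's. But the central claim on which your induction rests, namely that the span $W$ of the $F_{XY}(m,n)$ is stable under $\partial_{z,0}$ and $\partial_{z,1}$ because ``the monomials that have lost their only $e_z$ must cancel in pairs coming from different monomials of the same $F$'', is false, and this is a genuine gap. The $e_z$-free terms do not cancel. For instance, with $s=1$ a direct computation gives
\[
\partial_{z,0}F_{ee}(0,0)=-e_1e_0e_0-e_1e_1e_0\neq 0,
\]
the two surviving words coming from the monomials $A_s\overline{B}_m e_z A_n$ and $A_m e_z B_n A_s$ respectively; they are distinct words and nothing in $F_{ee}(0,0)$ cancels them. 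In general one finds
\[
\partial_{z,0}F_{ee}(m,n)=F_{oe}(m-1,n)-\Delta\bigl(A_s\overline{B}_{m+n}\bigr),\qquad \Delta(w):=w-\tau(w),
\]
where $\tau$ is the anti-automorphism with $\tau(e_0)=-e_1$, $\tau(e_1)=-e_0$, and likewise $\partial_{z,1}F_{ee}$, $\partial_{z,0}F_{oo}$ and $\partial_{z,1}F_{oo}$ each carry a term $\pm\Delta(A_s\overline{B}_{m+n})$ or $\pm\Delta(A_s\overline{B}_{m+n+1})$. These leftovers are nonzero elements of $\mathbb{Q}\left\langle e_0,e_1\right\rangle$, so in particular your base-case assertion that $\partial_{z,b}F_{ee}(0,0)$ vanishes for length reasons also fails.

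To close the gap you need one further input beyond Lemma \ref{lem:diff_zero} and the induction: the duality relation $L(\Delta(w))=0$ for $w\in\mathcal{A}^1\cap\mathbb{Q}\left\langle e_0,e_1\right\rangle$ (path reversal combined with $t\mapsto 1-t$; in the example above it reduces to $\zeta(1,2)=\zeta(3)$). With duality in hand, $L(\partial_{z,b}F)=0$ follows from the inductive hypothesis applied to the lower $F$'s \emph{plus} duality applied to the $\Delta$-terms, and Lemma \ref{lem:diff_zero} then yields $L(F)=0$ exactly as you intended. The remaining parts of your outline --- homogeneity in word length, membership of each $F$ in $\mathcal{I}\cap\mathcal{A}^{1}$, and the ordering of the induction --- are fine as stated.
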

\begin{proof}
We shall prove the proposition by induction on the word length. For
$w\in\mathcal{A}^{1}\cap\mathbb{Q}\left\langle e_{0},e_{1}\right\rangle $,
we set 
\[
\Delta(w)=w-\tau(w),
\]
where $\tau$ is an anti-automorphism defined by $\tau(e_{0})=-e_{1}$
and $\tau(e_{1})=-e_{0}$. Since
\begin{align*}
\partial_{z,0}\left(A_{s}\overline{B}_{m}e_{z}A_{n}\right)= & \; A_{s}\overline{A}_{m}e_{z}A_{n}-A_{s}\overline{B}_{m+n},\\
\partial_{z,0}\left(A_{m}e_{z}(A_{s}+\overline{A}_{s})e_{z}A_{n}\right)= & \begin{cases}
\begin{aligned} & B_{m-1}e_{z}(A_{s}+\overline{A}_{s})e_{z}A_{n}\\
 & \;-A_{s+m}e_{z}A_{n}-A_{m}e_{z}A_{s+n}
\end{aligned}
 & \mbox{ for }m>0\\
-A_{s}e_{z}A_{n}-e_{z}A_{s+n} & \mbox{ for }m=0,
\end{cases}\\
\partial_{z,0}\left(A_{m}e_{z}B_{n}A_{s}\right)= & \begin{cases}
B_{m-1}e_{z}B_{n}A_{s}+\tau(A_{s}\overline{B}_{m+n}) & \mbox{ for }m>0\\
\tau(A_{s}\overline{B}_{n}) & \mbox{ for }m=0,
\end{cases}\\
\partial_{z,0}\left(-B_{s+m}e_{z}A_{n}\right)= & \;0,\\
\partial_{z,0}\left(-A_{m}e_{z}\overline{B}_{s+n}\right)= & \begin{cases}
-B_{m-1}e_{z}\overline{B}_{s+n}+A_{m}e_{z}A_{s+n} & \mbox{ for }m>0\\
e_{z}A_{s+n} & \mbox{ for }m=0,
\end{cases}
\end{align*}
we find 
\[
\partial_{z,0}F_{ee}(m,n)=F_{oe}(m-1,n)-\Delta(A_{s}\overline{B}_{m+n})
\]
for $m,n\in\mathbb{Z}_{\geq0}$. Similarly, 
\begin{align*}
\partial_{z,1}F_{ee}(m,n) & =-F_{eo}(m,n-1)+\Delta(A_{s}\overline{B}_{m+n}),\\
\partial_{z,0}F_{oe}(m,n) & =0,\\
\partial_{z,1}F_{oe}(m,n) & =F_{ee}(m,n)-F_{oo}(m,n-1),\\
\partial_{z,0}F_{eo}(m,n) & =-F_{ee}(m,n)+F_{oo}(m-1,n),\\
\partial_{z,1}F_{eo}(m,n) & =0,\\
\partial_{z,0}F_{oo}(m,n) & =-F_{oe}(m,n)+\Delta(A_{s}\overline{B}_{m+n+1}),\\
\partial_{z,1}F_{oo}(m,n) & =F_{eo}(m,n)-\Delta(A_{s}\overline{B}_{m+n+1}).
\end{align*}
Using Lemma \ref{lem:diff_zero} and the duality relation, i.e. $L(\Delta(w))=0$
for $w\in\mathcal{A}^{1}\cap\mathbb{Q}\left\langle e_{0},e_{1}\right\rangle $,
we obtain the proposition.
\end{proof}
By Proposition \ref{prop:1}, it follows that
\begin{align*}
0 & =\lim_{z\rightarrow-0}L(F_{ee}(m,n))\\
 & =L(A_{s}\overline{B}_{m}\overline{B}_{n}+A_{m}\overline{B}_{s}\overline{B}_{n}+A_{m}\overline{A}_{n+1}A_{s}-A_{s+m+n+1})\\
 & =(-1)^{s+m+n}\left\{ \zeta(\{2\}^{s-1},3,\{2\}^{m-1},3,\{2\}^{n})+\zeta(\{2\}^{m-1},3,\{2\}^{s-1},3,\{2\}^{n})\right.\\
 & \left.\qquad\qquad\qquad\qquad\qquad\qquad-\zeta(\{2\}^{m-1},3,\{2\}^{n},1,\{2\}^{s})+\zeta(\{2\}^{s+m+n+1})\right\} ,
\end{align*}
which proves Theorem \ref{Main_theorem}.

\bibliographystyle{abbrv}
\bibliography{HoffmanConj}

\end{document}